\titleformat{\paragraph}[runin]{\normalfont\normalsize\bfseries}{}{\parindent}{}[.]
\newcommand{\rar}{\rightarrow}
\newcommand{\tr}{\textnormal{tr}}
\newcommand{\End}{\textnormal{End}}
\newcommand{\Rep}{\mathbf{Rep}}
\newcommand{\C}{\mathbb{C}}
\newtheorem{thm}{Theorem}
\newtheorem{lem}[thm]{Lemma}
\newtheorem{prop}[thm]{Proposition}
\theoremstyle{definition}
\begin{document}

\title{Exceptional embeddings of $N=2$ minimal models}
\author[1]{Ana Ros Camacho}
\affil[1]{\small Facultat de Ciències Matemàtiques, Universitat de València, Carrer del Doctor Moliner 50, 46100 Burjassot (València), Spain}
\author[2]{Thomas A. Wasserman}
\affil[2]{Aalto University, Espoo, Finland}

\maketitle

\begin{abstract}
Vafa and Warner observed that the Landau-Ginzburg model associated to the potential $E_6$ (resp. $E_8$) is a product of two other models, associated to the potentials $A_2$ and $ A_3$ (resp. $A_2 $ and $ A_4$). We translate this along the Landau-Ginzburg / Conformal Field Theory correspondence to a conjecture about the unitary minimal quotients $M_d$ of the $N=2$ superconformal algebra of central charge $c_d=3-\frac{6}{d}$: there should be a conformal embedding $M_{12}\hookrightarrow M_{3} \otimes M_4$ (resp. $M_{30}\hookrightarrow M_{3} \otimes M_5$) that exhibits the product as Ostrik's $E_6$ (resp. $E_8$) algebra in the $\Rep(su(2)_{10})$ (resp. $\Rep(su(2)_{28})$) factor of the NS-sector of $\Rep(M_{12})$ (resp. $\Rep(M_{30})$). We motivate, formulate, and prove this conjecture. 
\end{abstract}
\setcounter{tocdepth}{3}

\section{Introduction}
The Landau-Ginzburg / Conformal Field Theory (LG/CFT) correspondence appeared in the physics literature in the late 80s (\cite{Fermi1989,Howe1989a,Kastor1989,Vafa1989}). It links LG-models, namely $\left( 2,2 \right)$-supersymmetric sigma models completely characterised by a quasihomogeneous polynomial $W$ called the potential, with $N=2$ CFTs. The so-called non-renormalisation theorem asserts that the potential $W$ is invariant under the renormalisation group flow, yielding a correspondence between the LG-model associated to $W$ and the CFT at the infrared fixed point. At the level of topological defect lines, this predicts equivalences of tensor categories between categories of matrix factorisations of $W$ and  categories of representations of vertex operator algebras (VOAs), respectively. It is an open problem to provide a rigorous mathematical version of the LG/CFT correspondence. This article is part of a program \cite{Brunner2007,Carqueville2016b,Davydov2018,RosCamacho2020,RosCamacho} of work towards this, and provides evidence that the LG/CFT correspondence preserves tensor products of quantum field theories. 
Our result can be phrased purely in terms of VOAs (see Section~\ref{sect: proof} for the notation):
\begin{thm}\label{thm: main theorem}
    There are conformal embeddings between N=2 minimal quotients $M_d$ of central charge $3-\frac{6}{d}$, with $d \geq 2$, 
    \begin{align*}
        M_{12}&\hookrightarrow M_3 \otimes M_4\\
        M_{30}&\hookrightarrow M_3 \otimes M_5.
    \end{align*}
    Furthermore, these embeddings exhibit $M_3\otimes M_4$ and $ M_3 \otimes M_5$ as the algebras $E_6$ and $E_8$ in $\Rep(M_{12})$ and $\Rep(M_{30})$, respectively.
\end{thm}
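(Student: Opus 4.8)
The plan is to translate each claimed embedding into the existence of a connected étale algebra, using the dictionary (Huang--Kirillov--Lepowsky, Kirillov--Ostrik) between conformal extensions of a unitary rational vertex (super)algebra $V$ and connected étale algebras in $\Rep(V)$. Concretely, $M_{12}\inc M_3\otimes M_4$ amounts to exhibiting $M_3\otimes M_4$ as such an algebra $A$ in the NS-sector of $\Rep(M_{12})$ with $\Rep(M_{12})_A^{\tn{loc}}\simeq\Rep(M_3)\bx\Rep(M_4)$, and likewise $M_{30}\inc M_3\otimes M_5$. The central-charge obstruction already vanishes, $c_{12}=\tfrac52=c_3+c_4$ and $c_{30}=\tfrac{14}{5}=c_3+c_5$, which is also why these particular pairs are the right ones; the work is to produce $A$ and to recognise it.

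First I would set up the braided categories. From the coset (Kazama--Suzuki) presentation of the $N=2$ minimal models, the NS-sector of $\Rep(M_d)$ is assembled from $\repsu$ and an explicit pointed braided category attached to the $u(1)$-charge lattice: it is the category of local modules over a ``glue'' algebra inside their Deligne product, and this is exactly what realises $\repsu$ as the ``$su(2)_{d-2}$ factor'' of $\Rep(M_d)$. I would make this explicit for $d=12$ and $d=30$, and also record the small categories $\Rep(M_3)$, $\Rep(M_4)$, $\Rep(M_5)$ and the factorisation $\Rep(M_3\otimes M_4)\simeq\Rep(M_3)\bx\Rep(M_4)$ (and its $E_8$ analogue).

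On the $\repsu$-factor the algebra is essentially forced: Ostrik's classification of module categories over $\Rep(su(2)_k)$ --- the ADE list --- provides at level $10$ the exceptional $E_6$-algebra $\mathbf{1}\oplus V_6$ and at level $28$ the $E_8$-algebra $\mathbf{1}\oplus V_{10}\oplus V_{18}\oplus V_{28}$, the objects underlying the conformal embeddings $su(2)_{10}\inc so(5)_1$ and $su(2)_{28}\inc (G_2)_1$. I would then build the candidate $A$ in $\Rep(M_{12})$ (resp. $\Rep(M_{30})$) by combining $E_6$ (resp. $E_8$) in the $\repsu$-factor with a Lagrangian algebra in the pointed factor, passing through the glue, and checking that $A$ is connected étale --- equivalently that all simple summands of $A$ have integral conformal weight and that the self-braiding on $A$ is trivial --- using the conformal weights and $S$-matrix of $\Rep(M_d)$. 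It then remains to identify $A$: it is a unitary rational $N=2$ model of central charge $\tfrac52$ (resp. $\tfrac{14}{5}$) whose graded character and whose category of local modules agree with those of $M_3\otimes M_4$ (resp. $M_3\otimes M_5$), which together pin down $A\cong M_3\otimes M_4$ (resp. $M_3\otimes M_5$) as a vertex superalgebra; the ``furthermore'' clause is then true by construction.

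The step I expect to be the real obstacle is this last identification, tangled with the bookkeeping of the NS/R grading and the superalgebra structure: the $\repsu$-factor and Ostrik's algebras live in the NS sector, so one must verify that the categorically assembled $A$ genuinely underlies the honest vertex superalgebra $M_3\otimes M_4$ rather than some other extension with the same NS-data, and this forces one to match characters, fusion rules, and the $u(1)$-charge lattice exactly, not merely up to invertible-bimodule (\pic) ambiguity. Of the two cases the $E_8$ one is the heavier computation but should go through by the same argument.
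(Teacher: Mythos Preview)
Your route is genuinely different from the paper's, and in fact runs in the opposite direction. You propose to build the \'etale algebra $A$ categorically in $\Rep(M_{12})$ (resp.\ $\Rep(M_{30})$) out of Ostrik's $E_6$ (resp.\ $E_8$) and the pointed glue, and then \emph{recognise} $A$ as the vertex superalgebra $M_3\otimes M_4$ (resp.\ $M_3\otimes M_5$). The paper instead writes down the conformal embedding first, as the honest diagonal $X\mapsto X\otimes 1+1\otimes X$ on the universal $N=2$ algebras $V_{d_1}\hookrightarrow V_{d_2}\otimes V_{d_3}$ (the central-charge check you already did is exactly what makes this conformal), and then observes that this map preserves the anti-involution and the commutation relations, hence the Shapovalov pairing, hence sends radical to radical and descends to $M_{d_1}\hookrightarrow M_{d_2}\otimes M_{d_3}$. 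With the embedding in hand, the ``furthermore'' is a short decomposition argument: integrality of $J$-weight and of conformal weight in $M_3\otimes M_4$ forces any NS highest weight to have $p=0$ and $r^2-1$ divisible by $48$ (resp.\ $120$), leaving only $r=1,7$ (resp.\ $r=1,11,19,29$), and a degree-$1$ (resp.\ degree-$7$) character expansion shows each occurs once. No coset bookkeeping, no Lagrangian in a pointed factor, no reconstruction.

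The step you yourself flag as the obstacle is exactly where your plan is incomplete: matching graded characters, fusion, and $u(1)$-lattice does \emph{not} by itself pin down a rational $N=2$ SVOA up to isomorphism; you would need a uniqueness/reconstruction theorem for such extensions, and none is invoked. The paper's direct construction simply bypasses this: since the map is explicit on generators, there is nothing to recognise. What your approach would buy, if the identification could be made rigorous, is a structural explanation of \emph{why} the extension is $E_6$/$E_8$ (it is assembled from Ostrik's algebra); what the paper's approach buys is a complete, elementary proof that avoids the Kazama--Suzuki machinery and the HKL dictionary entirely.
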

This paper consists of two sections. Section~\ref{sect: lgcft} explains how Theorem~\ref{thm: main theorem} is predicted by the LG/CFT correspondence. Section~\ref{sect: proof} provides a proof of the main theorem.

\section{The LG/CFT correspondence}\label{sect: lgcft}
To explain how Theorem~\ref{thm: main theorem} is predicted by the LG/CFT correspondence, we use a notion of \emph{orbifolding} and a coincidence of \emph{ADE-patterns} on both sides of the correspondence.

\paragraph{Orbifolding}
On the LG-side {orbifolding} \cite{Carqueville2016a} is formulated in a bicategory which has as its objects potentials and as its morphisms matrix factorisations \cite{Carqueville2016c}. An object $a$ is called an {orbifold} of $b$ if there is a monadic adjunction between them; this implies that $\End(b)$ is equivalent to the category of modules for the monad of the adjunction in $\End(a)$. Potentials related via orbifolding are called {orbifold equivalent}.

On the other hand, full CFTs are determined by a choice of VOA with conformal blocks $\mathcal{H}$ and a consistent choice of correlators in $\mathcal{H}\otimes\overline{\mathcal{H}}$. Any VOA admits a Cardy (or diagonal) choice of correlators. The full Cardy CFT for a VOA $V$ is an orbifold of the full Cardy CFT for $V'$ if there is a conformal embedding $V'\hookrightarrow V$. In the cases we are interested in, such an embedding realises $V$ as an algebra in $\Rep(V')$.

\paragraph{ADE in the LG/CFT correspondence}
In \cite{Vafa1989} the connection between LG-models and singularity theory (for the potential $W$) is explored. The potentials defining simple singularities follow an ADE-classification \cite{Arn,AGV}. We will focus on the $A_{d-1}$, $E_6$ and $E_8$ potentials, these are given by:
\begin{equation*}
\begin{split}
    A_{d-1} &=x^d +y^2,\\
    E_6 &=x^3+y^4, \\
    E_8 &=x^3+y^5,
\end{split}
\end{equation*}
for $d \geq 2$.
The categories of defect lines in the LG-model for a potential $W$ are given by matrix factorisations of $W$ \cite{Brunner2007}. The $E_6$ (resp. $E_8$) potential corresponds via orbifolding to a certain algebra in the category of matrix factorisations for $A_{11}$ (resp. $A_{29}$) \cite{Carqueville2016b}.

The unitary N=2 minimal models also follow an ADE classification \cite{Gannon08,Gray2008}. They are full CFTs with VOA $M_d$ for $d\geq 2$. Such full CFTs are classified by algebra objects in the Neveu-Schwarz (NS) sector of $\Rep(M_d)$. The $A_{d-1}$ model corresponds to the trivial algebra in $\Rep(M_d)$. The $E_6$ and $E_8$ models are orbifolds of $A_{11}$ and $A_{29}$, respectively. The categories $\Rep(M_{12})$ and $\Rep(M_{30})$ have $\Rep(su(2)_{10})$ and $\Rep(su(2)_{28})$) as factors \cite{Davydov2018} and these contain the orbifold algebras $E_6$ and $E_8$ (cf. the classification of algebras in $\Rep(su(2)_{d})$ in \cite{Ostrik2003}).

The LG/CFT correspondence links the LG-models with ADE potentials to the ADE N=2 minimal models. For the $A_d$-type potentials this translates to tensor equivalences between the categories of matrix factorisations for these potentials and the bosonic parts of $\Rep(M_d)$. This was established mathematically in \cite{Davydov2018} for $d$ odd and in \cite{RosCamacho} for arbitrary $d$ and preserves the algebras associated to $E_6$ and $E_8$ \cite{Carqueville2016b}.

\paragraph{The Vafa-Warner prediction}
There is a further observation from \cite{Vafa1989}, saying that the LG-model for $E_6$ (resp. $E_8$) is a product of two other models, $A_2 \otimes A_3$ (resp. $A_2 \otimes A_4$). The justification is simple: taking products of models adds potentials, and Kn\"orrer periodicity \cite[Proposition 1.2]{Carqueville2016b} gives an orbifold equivalence between any potential $W$ and $W+x^2 + y^2$, yielding $A_2+ A_3 \sim E_6$ and $A_2 + A_4 \sim E_8$. Translating this across the LG/CFT correspondence predicts an equivalence between the tensor product of the corresponding minimal models. To prove this equivalence one needs to show that the models $A_2\otimes A_3$ and $A_2 \otimes A_4$ are the $E_6$ and $E_8$ orbifolds of $A_{11}$ and $A_{29}$, respectively. Because the tensor product models are the full Cardy CFTs for the tensor products of the VOAs, this predicts the statement of Theorem~\ref{thm: main theorem}.

\section{Proof of the main theorem}\label{sect: proof}
We first recall some facts about $N=2$ minimal quotients, then we establish the existence of the conformal embeddings from Theorem~\ref{thm: main theorem}. After this we introduce the algebras $E_6$ and $E_8$ and finish the proof of Theorem~\ref{thm: main theorem} by doing a character computation.

\paragraph{$\mathbf{N=2}$ minimal quotients} Let $d \in \mathbb{Z}_{\geq 2}$, we will denote by $V_d$ the universal $N=2$ superVOA of central charge 
$$
c_d=3-\frac{6}{d}.
$$
It is generated by fields $J(z)$, $T(z)$, and $G^\pm(z)$, where $T(z)$ is the energy-momentum tensor, and $J$, $G^+$ and $G^-$ are Virasoro primaries of conformal weight $1$, $\frac{3}{2}$ and $\frac{3}{2}$ respectively \cite{Creutzig2019}. For the given central charges, $V_d$ has a unique nontrivial proper ideal $I_d$ \cite{GK}. The $N=2$ \textit{minimal quotient} $M_d$ is the quotient of $V_d$ by this ideal. 

There is a nice description of the ideal $I_d$ using the vacuum Shapovalov form. This is a pairing on the universal enveloping algebra of a Lie algebra $\mathfrak{g}$ equipped with an anti-involution $(-)^\dagger$ \cite[Definition 3.1]{IoharaKoga}. It is given by $\langle X, Y \rangle:=\pi(X^\dagger Y)$, where $\pi\colon U(\mathfrak{g})\rar \C$ is the projection onto the subspace spanned by $1$. For the mode algebra of $V_d$ one takes $(-)^\dagger$ to be defined by
$$
J_n^\dagger = J_{-n},\quad L_n^\dagger=L_{-n}, \quad (G^\pm_r)^\dagger=G_{-r}^\mp.
$$
where $n \in \mathbb{Z}$ and $r \in \mathbb{Z}+\frac{1}{2}$ in the NS sector. Note that the vacuum Shapovalov pairing can be computed using $(-)^\dagger$ and the commutation relations.

\begin{lem}
    The unique ideal of $V_d$ that yields $M_d$ is the radical of the Shapovalov form.
\end{lem}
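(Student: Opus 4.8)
The plan is to show that the radical $R:=\{v\in V_d : \langle v,w\rangle=0 \text{ for all }w\}$ of the vacuum Shapovalov form is a nonzero proper ideal of $V_d$, and then invoke the uniqueness of $I_d$ from \cite{GK} to conclude $R=I_d$. First I would record the two basic properties of $\langle-,-\rangle$: it is \emph{contravariant}, $\langle a v,w\rangle=\langle v,a^\dagger w\rangle$ for every mode $a$ of $J$, $L$, $G^\pm$, which is immediate from the definition $\langle X,Y\rangle=\pi(X^\dagger Y)$ (only $(-)^\dagger$ and the commutation relations enter), and it is \emph{graded}, $\langle v,w\rangle=0$ unless $v$ and $w$ have equal $L_0$-degree, since $L_0^\dagger=L_0$. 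Contravariance and $L_{-1}$-stability (as $L_{-1}^\dagger=L_1$) show that $R$ is stable under all modes of the generators, hence is a vertex-algebra ideal; and $R\neq V_d$ because $\langle 1,1\rangle=\pi(1)=1\neq 0$.

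The heart of the matter is to check $R\neq 0$, and here I would give the argument in one of two ways. The quick route: if $R=0$ then $\langle-,-\rangle$ is non-degenerate, and then $V_d$ is forced to be simple — any proper ideal $I$ is graded (as $L_0$ acts semisimply) and concentrated in degrees $>0$ (since $(V_d)_0=\C1$ and $1\notin I$), so $\langle 1,I\rangle=0$, i.e.\ $1\in I^\perp$; but $I^\perp$ is again an ideal by contravariance, so $I^\perp=V_d$ and hence $I=0$. This contradicts the existence of $I_d$, so $R\neq0$. The hands-on route, closer to the spirit of the $\dagger$-calculus: prove directly that $I_d\subseteq R$ by induction on $L_0$-degree. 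The base case is $I_d\cap(V_d)_0=0$; for $v\in I_d$ homogeneous of degree $n>0$, write an arbitrary $w\in(V_d)_n$ as a combination of vectors $a_{-m}u$ with $a\in\{J,L,G^\pm\}$, $m>0$, and $u$ homogeneous of degree $n-m$ (possible since $V_d$ is generated from the vacuum), and compute $\langle v,a_{-m}u\rangle=\pm\langle a^\dagger_{-m}v,u\rangle$; since $a^\dagger_{-m}v\in I_d$ has degree $n-m<n$, this vanishes by induction, so $v\in R$.

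Either way $R$ is a nonzero proper ideal of $V_d$, so by the uniqueness statement of \cite{GK} it must equal $I_d$, proving the lemma. The genuine input is \cite{GK}'s uniqueness; the rest is the standard formalism of contravariant forms, so the only points that need real care are the Koszul-sign bookkeeping in the super setting (harmless for the vanishing statements above), the verification that the radical is closed under \emph{all} vertex operators and not merely those of the generators, and the standard structural facts about $V_d$ used implicitly — that $(V_d)_0=\C1$, that $L_0$ acts semisimply, and that $V_d$ is freely generated from the vacuum by the modes of $J$, $L$, $G^\pm$.
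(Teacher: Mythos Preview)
Your proposal is correct and follows the same logical arc as the paper: establish that the radical of the Shapovalov form is a nonzero proper ideal, then invoke the uniqueness of the nontrivial proper ideal from \cite{GK}. The paper's proof simply outsources the first step to \cite[Proposition 3.4]{IoharaKoga}, whereas you unpack that step by hand (contravariance gives the ideal property, $\langle 1,1\rangle=1$ gives properness, and nontriviality follows either from the existence of $I_d$ or from the inductive containment $I_d\subseteq R$); the substance is the same.
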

\begin{proof}
    The radical of the Shapovalov form is a non-trivial proper ideal \cite[Proposition 3.4]{IoharaKoga}, and hence the unique non-trivial proper ideal of $V_d$.
\end{proof}

\paragraph{The conformal embedding} The conformal embedding descends from the diagonal map on the universal algebras. One checks that the central charges match to show:

\begin{lem}\label{lem: universal embedding}
    The diagonal map $X \mapsto X\otimes 1+ 1\otimes X$ is a conformal embedding $V_{d_1}\hookrightarrow V_{d_2}\otimes V_{d_3}$ exactly when (up to exchanging $d_2$ and $d_3$) we have $d_2=3$, and (i) $d_1=6$ and $d_3=3$, (ii)  $d_1=12$ and $d_3=4$, or (iii)  $d_1=30$ and $d_3=5$.
\end{lem}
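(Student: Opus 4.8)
The plan is to turn the claim into a purely arithmetic statement about central charges. For the diagonal map to be a conformal embedding, the energy-momentum tensor of $V_{d_1}$ must map to the energy-momentum tensor of $V_{d_2}\otimes V_{d_3}$, which (given that the diagonal map is a map of $N=2$ super VOAs, so it automatically respects the $J$, $G^\pm$, $T$ generators by construction) forces the central charges to add: $c_{d_1}=c_{d_2}+c_{d_3}$. So first I would write out
\begin{equation*}
3-\frac{6}{d_1}=\left(3-\frac{6}{d_2}\right)+\left(3-\frac{6}{d_3}\right),
\end{equation*}
which rearranges to $\frac{6}{d_1}+3=\frac{6}{d_2}+\frac{6}{d_3}$, i.e. $\frac{1}{d_2}+\frac{1}{d_3}-\frac{1}{d_1}=\frac12$, with $d_1,d_2,d_3\in\Z_{\geq 2}$.

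Next I would solve this Diophantine equation. Since each $d_i\geq 2$, we have $\frac{1}{d_2}+\frac{1}{d_3}=\frac12+\frac{1}{d_1}\leq 1$, and also $>\frac12$, so one of $d_2,d_3$ must be small; WLOG $d_2\leq d_3$, so $\frac{2}{d_2}\geq\frac{1}{d_2}+\frac{1}{d_3}>\frac12$, giving $d_2\leq 3$. The case $d_2=2$ gives $\frac{1}{d_3}-\frac{1}{d_1}=0$, hence $d_1=d_3$ and the embedding is the trivial one $V_{d}\hookrightarrow V_2\otimes V_d$ — but $V_2$ has central charge $0$, and one should check whether this is excluded by the (implicit) hypothesis that we want genuinely new embeddings, or simply record it; the cleanest move is to note $c_2 = 0$ so $V_2$ is (conjecturally/by the structure of the minimal quotient) trivial, or to simply observe the statement is "up to exchanging $d_2,d_3$" and phrase the $d_2=2$ case as degenerate. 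I will assume the intended reading excludes $d_2 = 2$. That leaves $d_2=3$, where the equation becomes $\frac{1}{d_3}-\frac{1}{d_1}=\frac16$, i.e. $d_1=\frac{6d_3}{6-d_3}$, which has integer solutions precisely for $d_3\in\{3,4,5\}$ (for $d_3=3$: $d_1=6$; $d_3=4$: $d_1=12$; $d_3=5$: $d_1=30$; $d_3\geq 6$ makes the denominator non-positive). This recovers exactly cases (i)–(iii).

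Finally I would argue the converse: in each of the three cases the diagonal map $X\mapsto X\otimes 1+1\otimes X$ does define a homomorphism of $N=2$ super VOAs from $V_{d_1}$ into $V_{d_2}\otimes V_{d_3}$ — this is formal, since the OPEs of $J,G^\pm,T$ in the universal algebra depend on $c$ only through the terms proportional to $c$, and the diagonal coproduct sends the central term for $c_{d_1}$ to the sum of the central terms, which matches $c_{d_2}+c_{d_3}=c_{d_1}$ — and that it is injective (the diagonal map on a freely generated object is injective, or one invokes simplicity considerations on the image). Conformality is then exactly the statement that $T_{d_1}\mapsto T_{d_2}\otimes 1+1\otimes T_{d_3}$, which is built into the diagonal map.

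The main obstacle is not the Diophantine analysis, which is elementary, but being careful about the two boundary issues: (a) correctly handling/excluding the degenerate $d_2=2$ branch so that the statement reads as clean as possible, and (b) justifying that the diagonal map really is a well-defined VOA homomorphism — in particular that no anomaly obstructs it once the central charges match, which requires knowing that the relevant structure constants of the universal $N=2$ algebra are uniquely pinned down by $c$ and the choice of $N=2$ structure. Both are routine given the references on $V_d$, but they are where the actual content beyond "add the central charges" lives.
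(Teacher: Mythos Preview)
Your approach is exactly the paper's: the authors' entire argument is the one-line remark ``one checks that the central charges match,'' and you have carried out that check in full, reducing to $\tfrac{1}{d_2}+\tfrac{1}{d_3}-\tfrac{1}{d_1}=\tfrac12$ and solving it. Your flagging of the degenerate branch $d_2=2$ (giving $V_{d}\hookrightarrow V_2\otimes V_d$ with $c_2=0$) is apt and more careful than the paper, which silently excludes it; otherwise there is nothing to add.
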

We will only be interested in cases (ii) and (iii), case (i) is a simple current extension.

\begin{prop}
      The embeddings from Lemma~\ref{lem: universal embedding} descend to the minimal quotients.
\end{prop}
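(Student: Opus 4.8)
The plan is to work with the description of the minimal quotients coming from the preceding lemma: $M_d = V_d/I_d$, where $I_d$ is the radical of the Shapovalov form on $V_d$, and where, crucially, $I_d$ is the \emph{unique} nontrivial proper ideal of $V_d$. Fix one of the triples $(d_1,d_2,d_3)$ of Lemma~\ref{lem: universal embedding} (cases (ii) and (iii)) and let $\phi\colon V_{d_1}\to V_{d_2}\otimes V_{d_3}$ be the diagonal map. The first step is to identify the kernel of the quotient $V_{d_2}\otimes V_{d_3}\twoheadrightarrow M_{d_2}\otimes M_{d_3}$: since the anti-involution $(-)^\dagger$ and the vacuum projection $\pi$ on the mode algebra of $V_{d_2}\otimes V_{d_3}$ are both defined componentwise, the Shapovalov form on $V_{d_2}\otimes V_{d_3}$ is the tensor product of the Shapovalov forms of the two factors, and the radical of a tensor product of bilinear forms is $\mathrm{rad}(B_1)\otimes U_2 + U_1\otimes\mathrm{rad}(B_2)$; hence this kernel is exactly $I_{d_2}\otimes V_{d_3}+V_{d_2}\otimes I_{d_3}$, the radical of the Shapovalov form on $V_{d_2}\otimes V_{d_3}$. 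The second step is to check that $\phi$ pulls this form back to the Shapovalov form on $V_{d_1}$: $\phi$ is a homomorphism of mode algebras intertwining the two $(-)^\dagger$'s and preserving the vacuum, and the one nontrivial point, $\pi_{V_{d_2}\otimes V_{d_3}}\circ\phi=\pi_{V_{d_1}}$, comes down to the equality $c_{d_1}=c_{d_2}+c_{d_3}$ — precisely the condition that makes $\phi$ a conformal embedding in Lemma~\ref{lem: universal embedding}. (Equivalently, $(V_{d_1})_0=\C\mathbf{1}$, so the invariant bilinear form on $V_{d_1}$ is unique up to scale, and the normalised pullback of the invariant form on $V_{d_2}\otimes V_{d_3}$ is forced to be the Shapovalov form of $V_{d_1}$.)

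Now let $\bar\phi\colon V_{d_1}\to M_{d_2}\otimes M_{d_3}$ be the composite of $\phi$ with the quotient. Its kernel is an ideal of $V_{d_1}$ not containing $\mathbf 1$, hence a proper ideal, hence $0$ or $I_{d_1}$. If $\ker\bar\phi=I_{d_1}$ we are done: $\bar\phi$ factors through an embedding $M_{d_1}\hookrightarrow M_{d_2}\otimes M_{d_3}$, which is conformal because $\phi$ — and therefore $\bar\phi$ — sends the conformal vector of $V_{d_1}$ to that of $V_{d_2}\otimes V_{d_3}$. So it remains to exclude $\ker\bar\phi=0$, i.e.\ that $V_{d_1}$ embeds conformally into $M_{d_2}\otimes M_{d_3}$. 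Here I would invoke unitarity: the $N=2$ minimal quotients are unitary VOAs, unitarity is preserved under tensor products, and the diagonal map has real (indeed rational) coefficients, so $\bar\phi$ is compatible with the natural real structures and pulls the positive-definite invariant Hermitian form of $M_{d_2}\otimes M_{d_3}$ back to a positive-definite — hence non-degenerate — invariant Hermitian form on $V_{d_1}$; but then the associated invariant bilinear form, which is the Shapovalov form, would be non-degenerate, contradicting $I_{d_1}=\mathrm{rad}(\langle-,-\rangle)\neq 0$. An alternative, more hands-on route is to exhibit the lowest-weight singular vector generating $I_{d_1}$, compute its image under $\phi$, and check directly that it lies in the radical of the Shapovalov form on $V_{d_2}\otimes V_{d_3}$ — a finite computation in a fixed, low conformal weight.

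The main obstacle is exactly this last point: showing that $\bar\phi$ is genuinely not injective, equivalently that $\phi$ really carries the singular vector(s) generating $I_{d_1}$ into the radical on the other side. A priori a reducible universal VOA could embed into a simple one, and restricting a non-degenerate bilinear form to a subalgebra need not keep it non-degenerate, so some genuine input is needed; unitarity of the minimal models (together with their tensor product) is the clean way to supply it, and the explicit singular-vector computation is the concrete fallback. Everything else — the identification of the kernel on the tensor-product side, the pullback of the Shapovalov form, the dichotomy for $\ker\bar\phi$, and conformality of the resulting map — is bookkeeping with the invariant form and its uniqueness.
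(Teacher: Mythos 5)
Your proof is correct, and its skeleton coincides with the paper's: identify $I_d$ with the radical of the Shapovalov form, observe that the diagonal map $\phi$ intertwines $(-)^\dagger$ and the commutation relations and is therefore an isometry for the Shapovalov pairings, and invoke uniqueness of the nontrivial proper ideal of $V_{d_1}$. The difference lies in the decisive step. The paper's proof concludes directly from the isometry that the radical of the pairing on $V_{d_1}$ is sent into the radical of the pairing on $V_{d_2}\otimes V_{d_3}$, and stops there. As you correctly flag, isometry by itself only yields that $\phi(I_{d_1})$ is orthogonal to the image $\phi(V_{d_1})$, not to all of $V_{d_2}\otimes V_{d_3}$; equivalently, it shows $\ker\bar\phi\subseteq I_{d_1}$ for the composite $\bar\phi\colon V_{d_1}\to M_{d_2}\otimes M_{d_3}$, whereas what is needed to descend is the reverse inclusion $I_{d_1}\subseteq\ker\bar\phi$. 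You close this by unitarity: the positive-definite invariant Hermitian form on $M_{d_2}\otimes M_{d_3}$ pulls back along $\bar\phi$ to a positive semi-definite form on $V_{d_1}$ whose radical is exactly $\ker\bar\phi$ and whose associated bilinear form is (by uniqueness of normalised invariant forms on $V_{d_1}$) the Shapovalov form, forcing $\ker\bar\phi=I_{d_1}$. This is a genuine extra ingredient --- unitarity of the minimal quotients and of their tensor product, which the paper uses only implicitly by calling them unitary --- and it buys a watertight argument at precisely the point where the paper's own proof is tersest; your fallback of checking the singular vector directly would serve the same purpose. The remaining bookkeeping in your write-up (the radical of the tensor-product form, the matching of central charges to make $\phi$ a homomorphism commuting with the vacuum projection, the dichotomy for $\ker\bar\phi$, and conformality of the induced map) is all correct.
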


\begin{proof}
The Shapovalov pairing is completely determined by the commutation relations between the modes and $(-)^\dagger$. Now, the embeddings preserve both $(-)^\dagger$ and the commutation relations, thus also preserve the pairing. In particular, they send the radical of the pairing into the radical of the pairing on the tensor product, and hence descend to an isometric conformal embedding on the quotients.
\end{proof}

This completes the proof of the first part of Theorem~\ref{thm: main theorem}. 

\paragraph{The algebra objects} The VOA $M_d$ is semisimple and has irreducible modules ${}^{[0]}C_{p;r}$ for $r=1,2,\dots, d-1$ and $p=-r,r-+1,\dots, r-1$, which are NS when $p+r$ is odd, with conformal weight $\Delta^{N=2}_{p,r}$ and $J$-highest weight ($J_0$-eigenvalue) $j_{p,r}$ given by \cite[Equation (4.10)]{Creutzig2019}
\begin{align}
    \Delta_{p,r}&=\frac{r^2-p^2-1}{4d}+\frac{1+\left( -1 \right)^{r+p}}{16}\label{eq: conformal weights}\\
    j_{p,r}&= \frac{p}{d}+ \frac{1+(-1)^{p+r}}{4}.\label{eq: jweight}
\end{align}
Writing $C_{r}:={}^{[0]}C_{0;r}$ and using \cite{Carqueville2016b,Davydov2018}, the algebras in Theorem~\ref{thm: main theorem} are
\begin{align}\label{eq: algs}
\begin{split}
 E_6&= C_{1} \oplus C_{7} \in \Rep (M_{12})\\
    E_8&= C_{1} \oplus C_{11}\oplus C_{19}\oplus C_{29} \in \Rep (M_{30}).
\end{split}
\end{align}
The characters of the $C_r$ are \cite{Dob,Mat} (see also \cite[Equation (5.17)]{Creutzig2019}):
\begin{equation}\label{eq: characters}
    \tr_{C_r}(q^{L_0})= \frac{q^{\Delta^{N=2}_{0,r}+\frac{1}{8}}}{\eta \left( q \right)^3} \sum\limits_{j \in \mathbb{Z}} \left( \frac{\vartheta_3 \left( 1;q \right)}{1+ q^{(2dj+r)/2}} -\frac{\vartheta_3 \left( 1;q \right)}{1+ q^{-(2dj+r)/2}} \right)q^{j(dj+r)}
\end{equation}
where $\eta \left( q \right)=q^{\frac{1}{24}}\prod_{i=1}^{\infty}(1-q^{i})$ is the Dedekind eta function and $\vartheta_3 \left( 1;q \right)=\prod_{i =1}^{\infty}(1+q^{i-\frac{1}{2}})^2(1-q^i)$ is a Jacobi theta function.
\begin{prop}
    We have 
    \begin{align*}
        M_3 \otimes M_4 &\cong C_{1} \oplus C_{7}\quad \quad \quad \quad\quad \quad\mbox{ and }\\
        M_3 \otimes M_5 & \cong C_{1} \oplus C_{11}\oplus C_{19}\oplus C_{29}
    \end{align*}
    as $M_{12}$ and $M_{30}$ modules, respectively.
\end{prop}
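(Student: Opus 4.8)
The plan is to let the conformal embedding supply the module structure and then identify it by a graded-dimension computation. By the preceding Proposition there is a conformal embedding $M_{12}\hookrightarrow M_3\otimes M_4$ (resp. $M_{30}\hookrightarrow M_3\otimes M_5$), so $M_3\otimes M_4$ becomes an $M_{12}$-module, and since $M_{12}$ is semisimple it decomposes as a direct sum $\bigoplus_i n_i N_i$ of irreducible $M_{12}$-modules with $n_i\in\Z_{\geq0}$ (finite, as $M_{12}$ has finitely many irreducibles and the graded pieces of $M_3\otimes M_4$ are finite-dimensional). As $M_3\otimes M_4$ is a vertex operator algebra with one-dimensional weight-zero subspace, the vacuum module $C_1=M_{12}$ occurs with multiplicity exactly one; and as $M_3\otimes M_4$ lies entirely in the Neveu--Schwarz sector, each $N_i$ is one of the NS modules ${}^{[0]}C_{p;r}$, i.e. has $p+r$ odd.

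First I would cut the possible $N_i$ down to a short list, using two constraints. The field $J\otimes 1+1\otimes J$ is a weight-one field of $M_3\otimes M_4$ whose zero mode has integer eigenvalues, since $J_0$ has integer eigenvalues on the vacuum module of each tensor factor and these eigenvalues add on the product; this is the image of $J_0$ under the embedding, so every $N_i$ has $J_0$-spectrum inside $\Z$, which by \eqref{eq: jweight} forces $p=0$, and then NS means $r$ is odd, so $N_i=C_r$. Moreover every irreducible submodule of the $\tfrac{1}{2}\Z_{\geq0}$-graded vertex algebra $M_3\otimes M_4$ has lowest conformal weight in $\tfrac{1}{2}\Z_{\geq0}$, so by \eqref{eq: conformal weights} we need $r^2\equiv 1\pmod{2d}$; a short computation gives $r\in\{1,5,7,11\}$ for $d=12$ and $r\in\{1,11,19,29\}$ for $d=30$. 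Hence $M_3\otimes M_4=C_1\oplus aC_5\oplus bC_7\oplus cC_{11}$ and $M_3\otimes M_5=C_1\oplus a'C_{11}\oplus b'C_{19}\oplus c'C_{29}$ for some non-negative integers $a,b,c,a',b',c'$.

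It then remains to determine these multiplicities. The lowest weights $\Delta_{0,r}$ of the candidate modules are pairwise distinct (they are $0,\tfrac{1}{2},1,\tfrac{5}{2}$ for $d=12$ and $0,1,3,7$ for $d=30$), so the multiplicities can be read off recursively from the low-order coefficients of the product characters $\tr_{M_3}(q^{L_0})\,\tr_{M_4}(q^{L_0})=\tr_{M_3\otimes M_4}(q^{L_0})$ and $\tr_{M_3}(q^{L_0})\,\tr_{M_5}(q^{L_0})=\tr_{M_3\otimes M_5}(q^{L_0})$, compared against the characters of the $C_r$ given by \eqref{eq: characters}: one needs the $q$-expansions up to order $q^{5/2}$ in the first case and up to order $q^{7}$ in the second. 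Carrying this out should leave only the multiplicities of $C_1$ and $C_7$ (resp. $C_1,C_{11},C_{19},C_{29}$) nonzero, each equal to one; together with the identification of $E_6$ and $E_8$ in \eqref{eq: algs} this proves the proposition and completes Theorem~\ref{thm: main theorem}.

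I expect this last step to be the real obstacle: even after the structural reduction it is a genuine computation, and expanding the nested $\eta$- and $\vartheta_3$-sums in \eqref{eq: characters} out to order $q^{7}$ is laborious and easy to get wrong. A cleaner and more satisfying route — which would also bypass the finite case-checks above — is to prove the exact $q$-series identities $\tr_{M_3}\tr_{M_4}=\tr_{C_1}+\tr_{C_7}$ and $\tr_{M_3}\tr_{M_5}=\tr_{C_1}+\tr_{C_{11}}+\tr_{C_{19}}+\tr_{C_{29}}$ outright, for instance by manipulating \eqref{eq: characters} with Jacobi-triple-product identities, or via a Kazama--Suzuki coset presentation of the $M_d$ together with the corresponding $\widehat{\mathfrak{su}(2)}$ string-function identities; linear independence of the charge-refined characters of the irreducible $M_d$-modules then finishes the argument.
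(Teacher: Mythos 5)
Your proposal is correct and follows essentially the same route as the paper: use the conformal embedding and semisimplicity to decompose the tensor product, force $p=0$ via integrality of the $J_0$-spectrum, constrain $r$ via the lowest conformal weights, and then fix the multiplicities by comparing low-order character coefficients, which are triangular because the lowest weights are distinct. The only difference is that the paper uses the sharper observation that even $J$-weight forces \emph{integral} (not just half-integral) conformal weight, which removes $C_5$ and $C_{11}$ from the $d=12$ candidate list immediately and reduces the character expansion there to degree $1$, whereas your half-integrality bound leaves them in and requires expanding to order $q^{5/2}$; the remaining finite $q$-expansion, which you describe but do not carry out, is exactly the computation the paper records in its table.
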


\begin{proof}
    The conformal weights of vectors in $M_d$ are integers and half-odd integers, so the same is true for $M_{3}\otimes M_{4}$ and $M_{3}\otimes M_{5}$. The J-weights are integers, acting by $L_n$ and $J_n$ modes leaves the J-weight invariant, acting by $G^\pm_r$ changes the J-weight by $\pm 1$. From this we also see that homogeneous vectors with odd J-weight have half-odd conformal weight, even J-weight implies integral conformal weight.

    Consider the highest weight vectors in $M_{3}\otimes M_{4}$ and $M_{3}\otimes M_{5}$. As these generate modules ${}^{[0]}C_{p;r}$ in the NS sector we have $p+r$ odd. For an integral $J$-weight Equation \eqref{eq: jweight} we then require $p=0$, so both $r$ and the $J$-weight are even. Thus we need an integral conformal weight Equation \eqref{eq: conformal weights}, meaning that $r^2-1$ is divisible by 48 or 120, respectively. The possibilities are $r=1, 7$ (with conformal weights 0 and 1, respectively) for $M_{12}$ and $r=1,11,19,29$ (with conformal weights 0, 1, 3, and 7, respectively) for $M_{30}$, respectively.

    To show that the possible modules all occur once we compare the dimensions of the $L_0$-eigenspaces on both sides of Equation \eqref{eq: algs}. On the tensor product $L_0\in M_{12}$ acts as the sum of the $L_0$'s in each factor. So we have
    $$
    \tr (q^{L_0}|_{M_3\otimes M_4}) = \tr(q^{L_0}|_{M_3})\tr(q^{L_0}|_{M_4}),
    $$
    and a similar formula for $L_0\in M_{30}$ acting on $M_3\otimes M_5$. The characters are given in Equation \eqref{eq: characters}. For the $M_{12}$-case it suffices to expand to degree 1 -- this shows that the conformal weight 0 eigenspace is one dimensional and the weight 1 eigenspace is two dimensional. The module $C_0$ has a one-dimensional subspace of conformal weight 0, while $C_0$ and $C_7$ both have one vector of conformal weight 1, so both appear once.

    For $M_{30}$, we expand up to degree seven in $q$. This gives for the $L_0$-eigenspaces
        \begin{center}
        \begin{tabular}{c|c|c|c|c}
             Conformal weight& 0 & 1 & 3 & 7 \\ \hline
             Dimension in $M_3\otimes M_5$&1 &2 & 18 & 496\\ \hline
             Dimension in $C_1$&1 &1 & 6 & 107\\ \hline
             Dimension in $C_{11}$&0 &1 & 11 & 319\\ \hline
             Dimension in $C_{19}$&0 &0 & 1 & 69\\ \hline
             Dimension in $C_{29}$&0 &0 & 0 & 1\\ 
        \end{tabular}       .
        \end{center}
    The lower four rows add up to the first row, so each module occurs once. 
\end{proof}

This finishes the proof of Theorem~\ref{thm: main theorem}.

\subsubsection*{Acknowledgements}
The authors are very thankful to Simon Wood for helpful discussions and advice, and grateful to Shenji Koshida, Gabriel Navarro and David Ridout for comments on a draft of this paper. We acknowledge support from the London Mathematical Society through ARC's Emmy Noether Fellowship EN-2324-02. TW acknowledges support from the European Commission through a Marie Sklodowska-Curie Individual Fellowship with reference number 101203556 -- DisQ FA, Richard Wade's Royal Society of Great Britain University Research Fellowship, and the University of Oxford. ARC thanks Cardiff University and their University Research Leave 2025/26 Scheme for support.

\bibliography{library}
\bibliographystyle{alpha}

\end{document}